\theoremstyle{definition}
\newcommand{\ttt}{\mathcal{T}}
\newtheorem{theorem}{Theorem}
\newtheorem{remark}{Remark}
\newtheorem{corollary}{Corollary}
\newtheoremstyle{dotless}{}{}{\itshape}{}{\bfseries}{}{ }{}
\theoremstyle{dotless}
\newtheorem*{cndtn}{}
\newcommand{\ed}{\stackrel{D}{=}}
\newcommand{\ltwoto}{\stackrel{\lll^2}{\to}}
\newcommand{\seg}{see, e.g.,~}
\newcommand{\N}{\mathbb{N}}
\newcommand{\R}{\mathbb{R}}
\newcommand{\Z}{\mathbb{Z}}
\newcommand{\I}{\mathbf{1}}
\newcommand{\Beta}{\mathrm{B}}
\DeclareMathOperator{\E}{{\bf E}}
\renewcommand{\P}{{\bf P}}
\newcommand{\bbb}{\mathcal{B}}
\newcommand{\eee}{\mathcal{E}}
\newcommand{\iii}{\mathcal{I}}
\renewcommand{\lll}{\mathcal{L}}
\newcommand{\cond}{\hspace*{1ex} \rule[-1ex]{0.15ex}{3ex} \hspace*{1ex}}
\newcommand{\thru}{,\dotsc,}
\newcommand{\iid}{i.i.d.\ }
\newlength{\querylen}
\newcommand{\ttau}{\widetilde{\tau}}
\newcommand{\tpi}{\widetilde{\pi}}
\newcommand{\tW}{\widetilde{W}}
\newcommand{\var}{\mathbf{var}\,}
\newcommand{\cov}{\mathbf{cov}}
\newcommand{\cpi}[3]{{\pi_{#1,#2}(#3)}}
\newcommand{\ncpi}[3]{{\pi_{#1 #2}(#3)}}
\newcommand{\Dirichlet}{\mathrm{Dirichlet}}
\begin{document}

\title{Neighbour-dependent point shifts and random exchange models:
invariance and attractors}

\author{
  Anton Muratov\thanks{Chalmers
    University of Technology, Department of Mathematical Sciences,
    Gothenburg, Sweden. Email:
    \texttt{[muratov|sergei.zuyev]@chalmers.se}} 
  \and \addtocounter{footnote}{-1}Sergei Zuyev\footnotemark}
\maketitle

\begin{abstract} 
  Consider a stationary renewal point process on the real line and
  divide each of the segments it defines in a proportion given by \iid
  realisations of a fixed distribution $G$ supported by [0,1]. We ask
  ourselves for which interpoint distribution $F$ and which division
  distributions $G$, the division points is again a renewal process
  with the same $F$? An evident case is that of degenerate $F$ and
  $G$. Interestingly, the only other possibility is when $F$ is Gamma
  and $G$ is Beta with related parameters. In particular, the division
  points of a Poisson process is again Poisson, if the division
  distribution is Beta: $\Beta(r,1-r)$ for some $0<r<1$.

  We show a similar behaviour of random exchange models when a
  countable number of `agents' exchange randomly distributed parts of
  their `masses' with neighbours. More generally, a Dirichlet
  distribution arises in these models as a fixed point distribution
  preserving independence of the masses at each step. We also show
  that for each $G$ there is a unique attractor, a distribution of the
  infinite sequence of masses, which is a fixed point of the random
  exchange and to which iterations of a non-equilibrium configuration
  of masses converge weakly. In particular, iteratively applying
  $\Beta(r,1-r)$-divisions to a realisation of any renewal process
  with finite second moment of $F$ yields a Poisson process of the same
  intensity in the limit.

  \medskip \textbf{Keywords:} renewal process, neighbour-dependent
  shifts, adjustment process, random exchange, fixed point, Poisson
  process, Gamma distribution, Dirichlet distribution, random
  operator, attractor

\medskip
\textbf{AMS 2010 Subject Classification.} Primary: 60G55, Secondary: 60B12, 60D05

\end{abstract}

\section{Introduction}
\label{sec:introduction}


A Poisson point process is one of the fundamental objects in
Probability. Despite being one of the simplest to define and one of
the most-studied models, recent developments in stochastic calculus,
stochastic geometry, differential geometry of configuration spaces,
variational analysis on measures, continue to bring new insights and
deepen our understanding of this seemingly elementary concept, \seg
\cite{PecRei:15} and the references therein. It is hard to
underestimate the usefulness of the Poisson process in applications which
are due to its appealing properties often enabling to produce
mathematically tractable models. One of such fundamental properties of
a Poisson process is that whenever we apply independent random shifts
to its points, the resulting process is again Poisson. In particular,
if the shifts are i.i.d., the result of such a transformation of a
homogeneous Poisson process is again a homogeneous
Poisson process of the same intensity.

The fact that random independent shifts preserve the Poisson process
distribution reflects `independence' of its points. However,
transformations which depend on two or more neighbouring points
destroy this independence and hence, as one may naturally think, the
Poisson process. For instance, the mid-points of the consecutive
segments in a homogeneous Poisson process on the line do \emph{not}
form a Poisson process, as it can be easily checked. If one considers
again the midpoints of this process, this second iteration of the
initial process actually corresponds to the transformation by which
the points move to the centres of their one-dimensional Voronoi cells
(which are also their centres of gravity). Such a transformation,
known as the \emph{adjustment process}, can be defined in any
dimension with respect to any metric and it forms the basis of a
popular Lloyd's algorithm in computational geometry. The adjustment
process is used to model behaviour of repulsing particles or animals,
\seg \cite[Chap.~7.3.2]{OBSC:00}, Lloyd's algorithm and its variations
is widely used in image compression and optimisation, \seg
\cite{DFG:99} and the references therein. It can be shown that the
variance of the inter-point distances in one-dimensional adjustment
model on any compact set vanishes, so iterations of the adjustment
procedure converge (in a suitable sense) to a regular array of points,
see~\cite{HasTan:76}. A similar phenomenon of convergence to a lattice
is observed in a multi-dimensional case, although questions of the
uniqueness of the limiting configuration still remain,
see~\cite{DEJ:06}.


If dividing the consecutive segments in the Poisson process in half
(as well as in any other given non-random proportion) produces a
non-Poisson process, an interesting question raises: is there
a way to divide each of the segments independently of the others in a \emph{random}
proportion so that the division points still form a Poisson process?

Somewhat surprisingly, the answer is \emph{yes}: the segments must
be divided in Beta-distributed proportions with parameters
$(r,1-r)$ for some $r\in(0,1)$ and this is \emph{the only} class of
division distributions that preserves the Poisson process!


More generally, in Section~\ref{sec:2} we consider a stationary
renewal point process, the segments between its consecutive points
having lengths drawn independently from a distribution $F$. Divide its
every segment in a random proportion independently drawn from a given
distribution $G$ on $[0,1]$. One may think of the processes of the
division points as a transformation of the original point process by
which its every point moves to the closest division point on its
right, for instance.  Since the distribution of the division points
depends on the distance to the neighbouring point to the right only,
we call such transformation \emph{neighbour-dependent shifts}. We show
in Section~\ref{sec:2} that Beta-distributed shifts are the only
non-trivial ones which preserve a renewal process and that a renewal
process preserved by neighbour-dependent shifts is necessarily a
process with Gamma-distributed segments. Poisson process with
exponential segment lengths provides an example.

Since the resulting process has interpoint segments composed each of
pieces of two original segments, one may naturally consider general
compositions involving more than two of these. To this end, in
Section~\ref{sec:exchange} we establish correspondence of the
neighbour-dependent shifts model to the so-called random exchange
process which allows for such a composition interpretation. In this
process a set of `agents' simultaneously exchange their `masses' with
the neighbours in randomly drawn proportions. We show that independent
gamma distributed initial masses are preserved by
Dirichlet-distributed proportions. Moreover, we prove that the
iterations of the exchange process starting from any stationary
sequence of masses with a finite second moment weakly converge to a
limit which is a sequence of independent Gamma-distributed masses in
the case of Dirichlet-distributed proportions. To our knowledge, so
far only the exchange models with a finite number of agents were
studied in the literature, the main tool here is analysis of
convergence of the product of random matrices, see \cite{McKin:14} and
the references therein.  The r\^ole of the agents and masses in our
models play the segments and their lengths, respectively, so we have
to deal with the product of infinitely-dimensional linear operators
instead. Although the results of Section~\ref{sec:exchange} are
somewhat reminiscent of the finite case, the behaviour in this
non-compact framework is rather different.

Throughout this paper,
$\Gamma(a,\gamma)$ denotes the Gamma distribution with shape parameter
$\alpha$ and rate parameter $\gamma$, its density is given by
\begin{displaymath}
  f_\Gamma(x)=\frac{\gamma^\alpha}{\Gamma(\alpha)}x^{\alpha-1}e^{-\gamma
    x},\ x>0,
\end{displaymath}
and $\Beta(\alpha,\beta)$ is the Beta distribution with density
\begin{displaymath}
  f_\Beta(x)=\frac{\Gamma(\alpha+\beta)}{\Gamma(\alpha)\Gamma(\beta)} 
  x^{\alpha-1}(1-x)^{\beta-1}, x\in(0,1).
\end{displaymath}

More generally, a random vector $X=(X_1, \dotsc, X_r)$ with support on
a $(r-1)$-dimensional simplex
$\{(x_1,x_2,\dotsc,x_r):x_1+x_2+\dotsc+x_r=1, r\geq 2\}$ has a
Dirichlet distribution with positive real parameters $(\alpha_1,
\alpha_2,\dotsc,\alpha_r)$ if its density is given by
\begin{displaymath}
  f_X(x) = \frac{\Gamma(\sum_{i=1}^r\alpha_i)}{\prod_{i=1}^r
    \Gamma(\alpha_i)}\prod\limits_{i=1}^r x_i^{\alpha_i-1}
\end{displaymath}
We write $X\sim\Dirichlet(\alpha_1, \alpha_2, \dotsc, \alpha_r)$. 
Note that $\Beta(\alpha,\beta)$ is $\Dirichlet(\alpha,\beta)$.

If $Y_i,\ i=1\thru r$ are independent random variables $Y_i\sim
\Gamma(\alpha_i,\gamma)$ with a common $\gamma>0$, then the vector
$(Y_1/Y\thru Y_r/Y)$, where $Y=Y_1+\dotsc +Y_r$, has
$\Dirichlet(\alpha_1\thru \alpha_r)$ distribution.


\section{Invariance under neighbour-dependent shifts}
\label{sec:2}


The main object of our study in this section is a stationary point
process $T$ on the real line. Each realisation of $T$ can be
associated with a countable set of the intervals
$\iii(T)=\{I_k\}_{k\in\Z}$ between its consecutive points. Given a
realisation, we are going to introduce its transformation which
involves dividing the segments between its consecutive points in a
random proportion drawn independently from a given distribution and
then studying the distribution of the resulting process of the
division points.  We fix a \emph{division distribution} $G$ supported
by $[0,1]$ and define the following (random)
operator $\Psi_G$ acting on the set of countable subsets of $\R$
without accumulation points:
\begin{displaymath}
  \Psi_GT=T'=\cup_{I\in\iii(T)} c(I),
\end{displaymath}
where $c(I)=x+b_I(y-x)$ for an interval $I=(x,y)\in\iii(T)$ and $b_I$
is a random variable taken from the distribution $G$ independently of
anything.  Geometrically, one may think that every point $x$ of $T$ is
shifted to a new location $c(I)$, where $I$ is the interval to the
right from $x$, 
this is why we call $\Psi_G$ the
\emph{neighbour-dependent shift operator}.  Obviously, when $G$ is
concentrated on 0 or on 1, the corresponding operator $\Phi_G$
preserves the distribution of any stationary point process. 

We do not specify how the indexing of the intervals is done: the right
neighbour to $I_k$ may or may not be $I_{k+1}$. A common way to define
$I_0$ as the zero-interval, i.e.\ the one containing the origin,
introduces a size bias. In addition, the zero-interval $I'_0$ of $T'$
may be composed either of pieces of $I_0$ and $I_1$ or of $I_{-1}$ and
$I_0$. To avoid unnecessary technicalities involving either
re-indexing of $T'$ or a point-stationary indexing
of $T$, we choose to work on the Palm space instead.

It is well-known that there is one-to-one correspondence between the
distributions of a stationary point process and of a stationary
sequence $\ttt=(\tau_k)_{k\in\Z}$ of positive random variables with a
finite mean. This sequence is related to the Palm version of the point
process: under the Palm distribution there is almost surely a point
$T_0=0$ of the process at the origin, and if $T_n$ denotes the $n$th
closest process point to the origin on the positive semi-axis for
$n\geq 1$ and on the negative semi-axis for $n<0$, then
$\tau_k=T_k-T_{k-1}$, $k\in\Z$, represents the lengths of the $k$th
interpoint interval $I_k$. In the case of a renewal process, the
sequence $\ttt$ is \iid drawn from a distribution $F$ with finite
mean. Relation between the distributions of the point process and the
corresponding stationary sequence is given in general by the
Ryll-Nardzewski exchange formula, \seg \cite[Sec.~13.3]{DalVJon:08}
for details.

This allows us to define a (stochastic) \emph{exchange operator}
$\Phi_G$ acting on sequences of non-negative numbers as follows: take
an \iid sequence $\{b_k\}_{k\in\Z}$ of $G$-distributed random
variables and set
\begin{equation}\label{eq:rec}
  \Phi_G\ttt=\ttt'=(\tau'_k)_{k\in\Z},\ \text{where}\ 
  \tau'_k = (1-b_{k})\tau_{k} + b_{k+1}\tau_{k+1}.
\end{equation}

The main result of this section is the characterisation of the class
of fixed points of the exchange operator~$\Phi_G$, and thus of the
random shift operator $\Psi_G$.  
The degenerate distribution concentrated at point $x$ is denoted by $\delta_x$.

\begin{theorem}\label{th:fixpt}
  Let $\ttt$ be an \iid\ sequence of positive integrable
  random variables with the distribution $F$ corresponding to a
  stationary renewal process $T$
  and $\Phi_G$ be the exchange operator~\eqref{eq:rec}. Then
  $\Phi_G(\ttt) \ed \ttt$, and thus $\Psi_G(T) \ed T$, if and only if
  one of the following alternatives is true:
  \begin{enumerate}[(i)]
  \item $F=\Gamma(\alpha,\gamma)$ and $G=\Beta(r\alpha, (1-r)\alpha)$
    for some constants $\alpha>0$, $\gamma>0$ and $r\in(0,1)$,
  \item $F=\delta_s$ for some $s\in (0,\infty)$ and
    $G=\delta_b$ for some $b\in [0,1]$.
  \end{enumerate}
\end{theorem}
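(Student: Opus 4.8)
The plan is to establish the two alternatives as sufficient first, and then to prove necessity by reducing the bi-infinite fixed-point equation to a statement about a single pair of adjacent intervals, which I resolve via the beta--gamma algebra together with Lukacs' characterisation of the gamma distribution.

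For the \emph{if} direction, (ii) is immediate: if $\tau_k\equiv s$ and $b_k\equiv b$ then $\tau'_k=(1-b)s+bs=s$. For (i) I would use the splitting property of the gamma law: if $\tau\sim\Gamma(\alpha,\gamma)$ and $b\sim\Beta(r\alpha,(1-r)\alpha)$ are independent, then $b\tau\sim\Gamma(r\alpha,\gamma)$ and $(1-b)\tau\sim\Gamma((1-r)\alpha,\gamma)$ are \emph{independent}. Applying this to every interval, write $\tau_k=L_k+R_k$ with $L_k=(1-b_k)\tau_k$ and $R_k=b_k\tau_k$; the whole family $\{L_k,R_k\}_{k\in\Z}$ is then mutually independent, with $L_k\sim\Gamma((1-r)\alpha,\gamma)$ and $R_k\sim\Gamma(r\alpha,\gamma)$. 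Since $\tau'_k=L_k+R_{k+1}$ and distinct indices use disjoint pieces, the $\tau'_k$ are independent, each with law $\Gamma((1-r)\alpha,\gamma)\ast\Gamma(r\alpha,\gamma)=\Gamma(\alpha,\gamma)=F$; hence $\Phi_G(\ttt)\ed\ttt$.

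For the \emph{only if} direction, let $\phi(s)=\E e^{-s\tau_0}$ be the Laplace transform of $F$ and set $A(s)=\E\,\phi(sb)$, $B(s)=\E\,\phi(s(1-b))$ for $b\sim G$. Writing out the joint Laplace transform of the adjacent pair $(\tau'_0,\tau'_1)$ and grouping the independent triples $(\tau_j,b_j)$, $j=0,1,2$, gives
\begin{displaymath}
  \E e^{-s\tau'_0-t\tau'_1}=B(s)\,\E\bigl[\phi(sb+t(1-b))\bigr]\,A(t).
\end{displaymath}
The hypothesis $\Phi_G(\ttt)\ed\ttt$ forces $\ttt'$ to be \iid $F$; its marginal gives $A(s)B(s)=\phi(s)$, and independence of $\tau'_0$ and $\tau'_1$ then yields, after cancelling $B(s)$ and $A(t)$, the separation identity $\E[\phi(sb+t(1-b))]=A(s)B(t)$. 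Reading this probabilistically with $U=b\tau_0$ and $V=(1-b)\tau_0$ (where $b\sim G$ is independent of $\tau_0\sim F$), the identity says precisely that $U$ and $V$ are \emph{independent}. Now $U+V=\tau_0$ and $U/(U+V)=b$, and since $b$ and $\tau_0$ are independent by construction, the sum $U+V$ is automatically independent of the ratio $U/(U+V)$. Assuming $U,V$ are non-degenerate and a.s.\ positive, both hypotheses of Lukacs' theorem hold, so $U\sim\Gamma(c,\gamma)$ and $V\sim\Gamma(d,\gamma)$ share a rate $\gamma$; then $F=\Gamma(c+d,\gamma)$ and $G=\Beta(c,d)$, which is alternative (i) with $\alpha=c+d$, $r=c/\alpha$. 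The degenerate situations give (ii): if $G=\delta_b$ with $b\in(0,1)$ then $U\perp V$ means $b\tau_0\perp(1-b)\tau_0$, forcing $\tau_0$ constant, i.e.\ $F=\delta_s$; symmetrically $F=\delta_s$ forces $b\perp(1-b)$, i.e.\ $G=\delta_b$; the trivial endpoints $G\in\{\delta_0,\delta_1\}$, already noted to preserve every stationary process, are subsumed in (ii) with $F=\delta_s$.

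The main obstacle is the bookkeeping around degeneracy: one must rule out interior atoms of $G$ at $0$ or $1$ mixed with mass in $(0,1)$, and verify non-degeneracy of $U,V$, so that the positivity hypothesis of Lukacs is genuinely available. A self-contained alternative to citing Lukacs, should one prefer it, is to first extract $F$ by evaluating the separation identity on the diagonal $s=t$: this yields $\phi\phi''=C\,\phi'^2$ with $C=\E[b]\,\E[1-b]/\E[b(1-b)]\ge 1$, whose solutions are exactly the gamma transforms ($C>1$) and the degenerate case ($C=1$, equivalently $G$ degenerate); then, with $F=\Gamma(\alpha,\gamma)$ fixed, the same identity becomes the moment recursion $\E[R^{p+1}]=\E[R^p]\,(\alpha\E R+p)/(\alpha+p)$ for $R\sim G$, whose solution is the moment sequence of $\Beta(\alpha\E R,\alpha(1-\E R))$, determined uniquely on $[0,1]$ by the Hausdorff moment problem.
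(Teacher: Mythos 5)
Your proposal is correct and follows essentially the same route as the paper: both arguments derive, from the factorisation of the joint Laplace transform of two adjacent new intervals, that $b\tau_0$ and $(1-b)\tau_0$ must be independent, and then invoke the shape--size independence characterisation of the gamma law (you cite Lukacs' theorem; the paper cites Mosimann's Theorem 4, which in the bivariate case is the same statement), with the degenerate alternative handled by the same elementary observation. Your sufficiency argument via the beta--gamma splitting property likewise coincides with the paper's.
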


\begin{proof}





  \textit{Necessity.} Considering three consecutive elements $X,Y,Z$ in
  $\ttt$, we note that $\Phi_G(\ttt)=\ttt' \ed \ttt$ implies, in
  particular, that the two consecutive elements in $\ttt'$ they contribute
  to, should also be independent, identically distributed with $F$, which
  in turn implies the following condition:
\begin{quote}
  If $X, Y, Z$ are independent $F$-distributed random variables
  and $a, b, c$ are independent $G$-distributed, then the random
  variables
  \begin{displaymath}
    (1-a)X+bY, (1-b)Y+cZ
  \end{displaymath}
  are also independent and $F$-distributed.
\end{quote}
In terms of the Laplace transforms, for $x_1, x_2<0$ we obtain:
\begin{multline*}
   \phi_{(1-a)X+bY, (1-b)Y+cZ}(x_1,x_2)
    = \E\exp\{x_1((1-a)X+bY)\\
  \qquad\qquad\qquad\qquad+x_2((1-b)Y+cZ)\}\\
   =\E\exp\{x_1(1-a)X\}\E\exp\{x_2cZ\}\E\exp\{x_1bY+x_2(1-b)Y\}\\
   \qquad =\phi_{(1-a)X}(x_1)\phi_{cZ}(x_2)\phi_{bY,(1-b)Y}(x_1,x_2).
\end{multline*}
On the other hand, the independence of $(1-a)X+bY$ and $(1-b)Y+cZ$ implies
\begin{multline*}
  \phi_{(1-a)X+bY, (1-b)Y+cZ}(x_1,x_2) = \phi_{(1-a)X+bY}(x_1)
  \phi_{(1-b)Y+cZ}(x_2)\\
  =\phi_{(1-a)X}(x_1)\phi_{bY}(x_1)\phi_{(1-b)Y}(x_2)\phi_{cZ}(x_2),
\end{multline*}
and hence
\begin{displaymath}
  \phi_{(1-b)Y,bY}(x_1,x_2)=\phi_{(1-b)Y}(x_1)\phi_{bY}(x_2),
\end{displaymath}
so the random variables $\eta_1=bY$ and $\eta_2=(1-b)Y$ are
independent.

Let us first suppose $Y\sim F$ is degenerate. Then the only case when
$Y$ is a sum of the two independent random variables $bY$ and $(1-b)Y$
is when both of them are degenerate, too. That means, the random
variable $b\sim G$ must be degenerate, leading us to alternative~(ii).

Suppose now that $Y\sim F$ is non-degenerate. Then so are $\eta_1$ and
$\eta_2$. In that case the random variables $b=\eta_1/(\eta_1+\eta_2)$
and $Y=\eta_1+\eta_2$ can be understood as shape and size variables of
the random vector $(\eta_1,\eta_2)$. Note that $b$ and $Y$ are
independent by construction, i.e.\ the shape is independent of the
size. Therefore, the only possibility for the joint distributions of
$\eta_1, \eta_2$ is for them to be independent, Gamma-distributed with
some positive shape parameters $a_1, a_2$ and a common rate $\gamma$,
see \cite[Theorem 4]{Mos:70}. Put $r=a_1/(a_1+a_2)$ and
$\alpha=a_1+a_2$. Then $b$ becomes $\Beta(r\alpha,
(1-r)\alpha)$-distributed and $Y$ conforms to $\Gamma(\alpha,\gamma)$,
proving the alternative~(i).

\textit{Sufficiency.} The alternative~(ii) trivially leads to the invariance.

For the sufficiency in case (i) it is enough to notice that due to
\cite[Theorem 4]{Mos:70}, for every $k\in\Z$ the random variables
$\tau_k b_k$ and $\tau_k (1-b_k)$ are independent, distributed as
$\Gamma(r\alpha,\gamma)$ and $\Gamma((1-r)\alpha, \gamma)$,
respectively. Hence the random variable $\tau'_k = (1-b_k)\tau_k +
b_{k+1}\tau_{k+1}$ is again Gamma-distributed, and moreover,
$\tau'_k=(1-b_k)\tau_k + b_{k+1}\tau_{k+1}$ is independent of
$\tau'_{k+1}=(1-b_{k+1})\tau_{k+1} + b_{k+2}\tau_{k+2}$, since all
their summands are independent. Hence the
sequence  $\{\tau'_k\}_{k\in\Z}$ is again i.i.d.,
$\tau'_0\sim\Gamma(\alpha,\gamma)$, thus finishing the proof.
\end{proof}

\begin{corollary}
  Since a homogeneous Poisson process with rate $\gamma$ has
  exponential $\Gamma(1,\gamma)$ distributed interpoint distances,
  the Beta division point distribution $G=B(r,1-r)$ for some $0<r<1$ is the only
  non-degenerate distribution preserving the Poisson process.
\end{corollary}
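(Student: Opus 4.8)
The plan is to obtain the corollary as a direct specialisation of Theorem~\ref{th:fixpt}, fixing the interpoint distribution $F$ to be exponential and reading off the admissible division distributions $G$. The first step is to make the translation between the point-process picture and the sequence picture explicit: a homogeneous Poisson process of intensity $\gamma$ on $\R$ has, under its Palm distribution, i.i.d.\ interpoint distances that are exponential with rate $\gamma$, and the exponential law is precisely $\Gamma(1,\gamma)$. Hence the renewal sequence $\ttt$ associated with the Poisson process has marginal $F=\Gamma(1,\gamma)$, which is alternative~(i) with shape parameter $\alpha=1$.

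Next I would invoke the theorem in both directions. Suppose $G$ is a division distribution with $\Psi_G(T)\ed T$, equivalently $\Phi_G(\ttt)\ed\ttt$. Since the exponential distribution is non-degenerate, alternative~(ii)—which requires $F=\delta_s$—is excluded, so we must be in alternative~(i). Substituting $\alpha=1$ into $G=\Beta(r\alpha,(1-r)\alpha)$ collapses it to $G=\Beta(r,1-r)$ for some $r\in(0,1)$. Conversely, the sufficiency part of Theorem~\ref{th:fixpt}, again with $\alpha=1$, already guarantees that every such $\Beta(r,1-r)$ does preserve the Poisson process, so these distributions are exactly the non-degenerate preserving ones.

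Finally, to justify the word \emph{only} I would recall the observation made just before the theorem that the degenerate distributions concentrated at $0$ or at $1$ trivially preserve any stationary point process; these are precisely the laws ruled out by the phrase \emph{non-degenerate}, so no other candidates remain. There is no genuine obstacle here, since the statement is a straight specialisation of Theorem~\ref{th:fixpt}; the only point needing care is the bookkeeping that $\alpha=1$ turns $\Beta(r\alpha,(1-r)\alpha)$ into $\Beta(r,1-r)$, together with the explicit exclusion of the degenerate-$G$ cases via the non-degeneracy hypothesis.
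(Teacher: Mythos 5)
Your proposal is correct and follows exactly the route the paper intends: the corollary is a direct specialisation of Theorem~\ref{th:fixpt} with $F=\Gamma(1,\gamma)$, where non-degeneracy of the exponential law rules out alternative~(ii) and setting $\alpha=1$ in alternative~(i) yields $G=\Beta(r,1-r)$. The paper offers no separate argument beyond this observation, so your bookkeeping (including the exclusion of degenerate $G$) matches its implicit proof.
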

\begin{corollary}
  Every second point in a homogeneous Poisson process form a renewal process
  with $\Gamma(2,\gamma)$-distributed interpoint distances. Thus
  a uniform division distribution which is also $G=B(1,1)$ preserves
  it. This also follows from a known elementary fact that if $X,Y$ are
  independent Exponentially-distributed random variables and $U$ is a
  uniform variable independent of them, then $U(X+Y)$ and $(1-U)(X+Y)$
  are independent Exponentially-distributed random variables.
\end{corollary}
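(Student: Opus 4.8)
The plan is to reduce the statement to Theorem~\ref{th:fixpt} together with one density computation. First I would recall that the interpoint distances of a homogeneous Poisson process of rate $\gamma$ are \iid $\Gamma(1,\gamma)$ variables; writing the consecutive intervals as $\tau_1,\tau_2,\dotsc$, the process obtained by retaining every second point has interpoint distances $\tau_1+\tau_2,\ \tau_3+\tau_4,\ \dotsc$. Each of these is a sum of two independent $\Gamma(1,\gamma)$ variables, hence $\Gamma(2,\gamma)$-distributed by additivity of the shape parameter of the Gamma family, and distinct sums involve disjoint original intervals, so they are mutually independent. This establishes that the retained points form a renewal process with $\Gamma(2,\gamma)$ segments.

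Next I would apply Theorem~\ref{th:fixpt}(i) with the parameters $\alpha=2$ and $r=1/2$. These yield the division law $G=\Beta(r\alpha,(1-r)\alpha)=\Beta(1,1)$, which is precisely the uniform distribution on $[0,1]$, so the theorem guarantees that uniform divisions preserve the $\Gamma(2,\gamma)$ renewal process, as claimed. The only conceptual point here is the identification of $\Beta(1,1)$ with the uniform law, which is what bridges the abstract characterisation and the concrete uniform division.

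For the self-contained elementary verification quoted in the statement, I would compute directly. Take $S\sim\Gamma(2,\gamma)$ with density $\gamma^2 s e^{-\gamma s}$ and $U$ uniform on $(0,1)$ independent of $S$, and perform the change of variables $(X,Y)=(US,(1-U)S)$, whose inverse is $S=X+Y$, $U=X/(X+Y)$ with Jacobian factor $(X+Y)^{-1}$. The joint density of $(X,Y)$ then becomes $\gamma^2 e^{-\gamma(x+y)}=\gamma e^{-\gamma x}\cdot\gamma e^{-\gamma y}$ on the positive quadrant, which factors into a product of exponential densities, so $US$ and $(1-U)S$ are independent $\Gamma(1,\gamma)$ variables. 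Feeding this into the recursion~\eqref{eq:rec} reproduces the sufficiency argument of Theorem~\ref{th:fixpt}(i) in this special case: each $\tau_k\sim\Gamma(2,\gamma)$ splits into the independent exponentials $b_k\tau_k$ and $(1-b_k)\tau_k$, so every $\tau'_k=(1-b_k)\tau_k+b_{k+1}\tau_{k+1}$ is a sum of two independent exponentials, hence $\Gamma(2,\gamma)$-distributed, while consecutive $\tau'_k$ share no exponential piece and are therefore independent. I expect no genuine obstacle here: the corollary is entirely a specialisation of Theorem~\ref{th:fixpt}, and the only calculation needing care is this Jacobian of the size-and-proportion map.
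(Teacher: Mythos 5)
Your proposal is correct and follows essentially the same route as the paper: the corollary is a direct specialisation of Theorem~\ref{th:fixpt}(i) with $\alpha=2$, $r=1/2$, using that $\Beta(1,1)$ is the uniform law and that sums of disjoint pairs of \iid $\Gamma(1,\gamma)$ intervals are \iid $\Gamma(2,\gamma)$. The only addition is that you verify by an explicit (and correct) Jacobian computation the splitting fact that the paper simply quotes as elementary, which is a welcome but inessential elaboration.
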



\section{Random exchange model: fixed points and convergence}
\label{sec:exchange}

As we have shown in the previous section, Beta division distribution
$G$ defines a neighbour-dependent shift operator which preserves a
renewal process with gamma-distributed interpoint distances. The two
immediate questions arise. Suppose we start from a renewal process
realisation $\ttt$ which is \emph{not} Gamma and apply the operator
$\Phi_G$ to it iteratively. Will the iterations
$\Phi_G^{(n)}(\ttt)=\Phi_G(\Phi_G^{(n-1)}(\ttt))$ converge to a Gamma
renewal process? The answer is \emph{yes}, provided the interpoint
distances have a finite second moment. Another question: if the `new'
interpoint intervals are composed of more than two `old' ones, how
much of the previous results still hold? We show below that
Gamma-distributed renewal process appears again, but the r\^ole of the
Beta distribution now plays the Dirichlet distribution.

Recall that at each iteration of $\Phi_G$ each interpoint segment,
independently of everything else, cuts a $G$-distributed proportion of
its length and passes it to the segment to the right of it, while at
the same time receiving a portion of length from the interval to the
left of it.

If we regard the intervals as `agents', and their lengths as `masses',
then the shift procedure defined by \eqref{eq:rec} can be interpreted
as a simultaneous random exchange, where at each application of
$\Phi_G$ every agent $i$ splits its current mass $\tau_i$ into two
random pieces in proportion $b_i\,:\, 1-b_i$ and shares it between
itself and its neighbour to the right, while at the same time
receiving a piece of length of its neighbour to the left. More
generally, we define a \emph{random mass exchange} model in discrete time as
follows.

Consider a countable collection of \emph{agents}, labelled by a sequence of integers
$i\in \Z$. Each agent is supplied with a non-negative entity called
its \emph{mass}. Assume that at the beginning of step $n$ the $i$th agent
has mass $\tau^n_i$, starting from some initial mass (row-)vector
$\tau^0 = (\tau^0_i)_{i\in \Z}$ at $n=0$.  Then each agent $i$ samples
a new vector of proportions $(\pi_{i, j}(n+1))_{j \in \Z},\ \sum_j
\pi_{i,j}(n+1)=1$, and distributes all of its mass between itself and other agents
accordingly, so that agent $j$ gets a portion $\tau_i^n
\cpi{i}{j}{n+1}$ of its mass from $i$, or, in a vector form,
\begin{equation}\label{eq:randexmatrix}
  \tau^{n+1} = \tau^{n} \Pi(n+1),\ n=0,1,2,\dotsc
\end{equation}
Here
\begin{displaymath}
\Pi(n) = (\ncpi{i}{j}{n})_{i,j \in \Z}, n=1,2, \dotsc
\end{displaymath}
is a mass exchange (two-side infinite) matrix, with
proportion vectors $(\cpi{i}{j}{n})_{j \in \Z}$ as its rows.
Obviously for every $n$, $\Pi(n)$ is row-stochastic:
\begin{itemize}
\item $\ncpi{i}{j}{n} \geq 0,\ i,j \in \Z,\ n =1,2,\dotsc$
\item $\sum_{j \in \Z} \ncpi{i}{j}{n} = 1,\ i\in \Z,\ n=1,2,\dotsc$
\end{itemize}

The mass exchange model can be regarded as a discrete version of a randomised
Potlatch process first defined in~\cite{Hol:81}, \cite{Lig:81}, where instead
of having a Poisson clock at each site, all of the sites' transitions are
synchronised.

For the sequel we assume two conditions on the initial mass configuration $\tau^0$:
\phantomsection
\begin{cndtn}[A1]\label{cond:A1}
$\tau^0_i$ are non-negative \iid random variables for different $i \in \Z$ with
$\mu=\E\tau^0_i < \infty$;
\phantomsection
\end{cndtn}
\begin{cndtn}[A2]\label{cond:A2}
The second moments of initial masses are finite: $\sigma^2=\var \tau^0_i < \infty$.
\end{cndtn}

To stay in the stationary framework, we will also assume that the
random exchange model is translation invariant on $\Z$:
\phantomsection
\begin{cndtn}[B1]\label{cond:B1}
  There exists a random probability \emph{sharing distribution}
  $(\pi_j)_{j\in \Z}$ on $\Z$ such that the vectors
  $(\cpi{i}{i+j}{n})_{j\in \Z}$ for different $i \in \Z,$ and
  $n=1,2,\dotsc$ are \iid copies of $(\pi_j)_{j\in \Z}$.
\end{cndtn}
Denoting by $p_{ij} = \E\ncpi{i}{j}{n}$ and by
$p_i = \E\pi_i,\ i,j\in \Z, n=1,2,\dotsc$, this implies that the
matrix of proportions' expected values
\begin{displaymath}
P = (p_{ij})_{i,j\in \Z}, n=1,2,\dotsc
\end{displaymath}
does not depend on $n$ and the following local balance condition is
satisfied:
\begin{equation}
  \label{eq:balance}
  \E \sum_i \pi_{ij}(n)=\sum_i p_{0,j-i}=\sum_k p_k=1.
\end{equation}
for all $n\in\N$ and $j\in\Z$. Here and below the indices in the sums
run over all integers, unless specified otherwise.

We will also use the following notation for matrix products:
\begin{displaymath}
  \Pi(1:n) = \Pi(1)\dotsc\Pi(n),\ \  \Pi(n:1) = \Pi(n)\dotsc \Pi(1).
\end{displaymath}







Note that the neighbour-dependent shifts in the previous section can be
regarded as a random exchange model with the two-diagonal exchange
matrix $\Pi(n) = (\cpi{i}{j}{n})_{i,j\in\Z}$ given by
\begin{displaymath}
  \cpi{i}{j}{n} = \begin{cases}
    1-b_i(n), & i=j,\\
    b_i(n), & i=j+1,\\
    0, & \text{otherwise,}
  \end{cases}
\end{displaymath}
where $b_i(n)\sim G,\ i\in\Z,\ n=1,2,\dotsc$ are \iid

\paragraph{Random walk in random environment (RWRE)}
\label{paragraph:RWRE}
There is a correspondence between the random exchange~\eqref{eq:randexmatrix} and a certain
version of RWRE which we define here.

Assume the translation invariance condition~\nameref{cond:B1}.
Introduce $\{W^n\}_{n\geq 0}$, a random walk in a random environment
(RWRE) on $\Z$, governed by the random transition probabilities
$\pi_{i,i+j}(n)$, conditional on the environment $\eee =
\sigma(\Pi(n), n=1,2,\dotsc)$:
\begin{displaymath}
  \P\left(W^{n+1}=i+j \vert W^n=i, \eee \right) = \pi_{i,i+j}(n+1) \ed \pi_{j}
\end{displaymath}
Then its $n$-step transition matrix is given by $\Pi(1:n)$.

In this interpretation of RWRE the environment is re-sampled on every
step $n=1,2,\dotsc$.  Note that if we integrate out the environment,
then due to independence of $\Pi(n)$ we can regard a single random
walker's trajectory as a usual time-homogeneous random walk on $\Z$
with transition probabilities given by 
\begin{displaymath}
  \P(W^{n+1}=i+j|W^n=i)= \E\pi_{i,i+j}(n+1)= p_j,\ i,j \in Z,\  n=1,2,\dots
\end{displaymath}

However if there is more than one random walker, the joint dynamics
are more involved. Below we will be interested in running several copies of
$W^n$ together in the same realisation of the environment
$\{\pi_{i,i+j}(n)\}$. In particular, we are going to consider the
process $Z_n$ of the difference between two
conditionally on $\eee$ independent  copies $W^n, \tW^n$ of such a
random walk:
\begin{displaymath}
Z^n = W^n - \widetilde{W}^n,\ n=1,2,\dots
\end{displaymath}
Note that given $\eee$, $Z^n$ is not a random walk, and not even
a Markov chain, since we have to know the positions of both walkers
to determine the conditional probabilities for the next step of $Z_n$.
However, if we integrate out the environment, $Z^n$ becomes
a Markov chain on $\Z$ with the transition probabilities
\begin{equation}\label{eq:ztranprob}
\P(Z^{n+1} = i+j | Z^n = i) =\begin{cases}
\sum\limits_{j_1-j_2=j} \E \pi_{j_1} \pi_{j_2} , & i=0,\\
\sum\limits_{j_1-j_2=j} p_{j_1} p_{j_2}, & i\neq 0.
\end{cases}
\end{equation}

Having this machinery at hand, we are ready to prove the sufficient conditions
for existence and uniqueness of an equilibrium point of a random exchange
model. We use notation $\Rightarrow$ to denote the
weak convergence of a sequence of infinite random sequences, meaning the weak
convergence of all of their finite-dimensional sub-vectors.

\begin{theorem}
  Under condition~\nameref{cond:B1}, there exists a unique (up to a
  distributional copy and scaling by a constant factor) fixed point
  for dynamics~\eqref{eq:randexmatrix}, i.e.\ a sequence
  $\tau^\infty=(\tau^\infty_i)_{i\in\Z}$ of (not necessarily
  independent) random variables such that:
  \begin{enumerate}[(i)]
  \item $\tau^\infty \Pi(1) \ed \tau^\infty,$  \label{cond:fixedpoint}
  \item \label{cond:weakconv} for any $\tau^0$ satisfying
    (\nameref{cond:A1}-\nameref{cond:A2}),
    \begin{equation}
      \label{eq:wc}
      \tau^0 \Pi(1:n) \Rightarrow \tau^\infty\ \text{as $n\to\infty.$}
    \end{equation}
  \end{enumerate}
\end{theorem}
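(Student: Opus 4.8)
The plan is to prove existence, fixed-point property, and uniqueness-with-convergence separately, using the RWRE machinery just set up. Let me think through each piece.

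For existence and the weak-convergence claim (ii), the natural strategy is to show that $\tau^0\Pi(1:n)$ is Cauchy in distribution. Writing out the $j$th coordinate of $\tau^n = \tau^0\Pi(1:n)$, we get $\tau^n_j = \sum_i \tau^0_i\,[\Pi(1:n)]_{ij}$. The entry $[\Pi(1:n)]_{ij}$ is exactly $\P(W^n = j \mid W^0 = i,\eee)$, the conditional $n$-step transition probability of the re-sampled RWRE. So $\tau^n_j$ is a weighted mixture of the i.i.d. masses $\tau^0_i$ with random (environment-dependent) weights summing to $1$. The expectation is $\mu$ by the balance condition, and the key quantity controlling convergence is the covariance structure.

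For the fixed-point property (i), the cleanest route is to identify $\tau^\infty$ as the distributional limit and then pass $\Pi(1)$ through the limit: since $\Pi(1)$ is independent of the environment generating the tail, $\tau^0\Pi(1:n)\Pi' \ed \tau^0\Pi(1:n+1)$ in an appropriate sense, and letting $n\to\infty$ on both sides gives $\tau^\infty\Pi(1)\ed\tau^\infty$. This requires justifying that multiplication by $\Pi(1)$ is continuous for the finite-dimensional weak topology, which for a row-stochastic matrix acting on a weakly convergent sequence with uniformly integrable coordinates should be routine.

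The main obstacle, and the heart of the matter, is controlling the covariance to get both convergence and uniqueness. I would compute $\cov(\tau^n_j,\tau^n_{j'})$ and $\var\,\tau^n_j$ using conditional independence of the masses: under~\nameref{cond:A1}-\nameref{cond:A2},
\begin{displaymath}
  \cov(\tau^n_j,\tau^n_{j'}) = \sigma^2\,\E\sum_i [\Pi(1:n)]_{ij}[\Pi(1:n)]_{ij'},
\end{displaymath}
and the diagonal sum $\E\sum_i [\Pi(1:n)]_{ij}^2$ is precisely the probability that two conditionally independent copies $W^n,\tW^n$ of the RWRE, started at the same point with distribution $p_j$, meet at time $n$ — i.e.\ $\P(Z^n = 0)$ for the difference chain whose transition law is~\eqref{eq:ztranprob}. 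The crux is therefore to analyse the Markov chain $Z^n$ on $\Z$: if $Z^n$ is recurrent the meeting probability stays bounded away from the value that would force degeneracy, while the key estimate is that the off-diagonal contributions decay, so that the variance of each coordinate stabilises and the finite-dimensional distributions converge. I expect the delicate point to be that $Z^n$, after integrating out the environment, has a transition kernel~\eqref{eq:ztranprob} that differs at the origin (where the two walkers share the same environment, introducing positive correlation $\E\pi_{j_1}\pi_{j_2}\ge p_{j_1}p_{j_2}$) from off the origin; this local perturbation at $0$ is exactly what drives the masses toward a nondegenerate equilibrium rather than collapsing to the constant $\mu$.

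For uniqueness up to scaling, I would argue that any fixed point $\tau^\infty$ must have coordinate means all equal (say to $\mu$) by the balance condition, and a common variance $v$; then applying the variance identity above to $\tau^\infty$ itself forces $v$ to satisfy a fixed relation tied to the stationary behaviour of $Z^n$, pinning down the full covariance structure, hence (by the earlier characterisation of which laws are preserved) the distribution up to the scale factor $\mu$. Since part~(ii) shows every admissible $\tau^0$ converges to the same limit regardless of $F$, uniqueness of the attractor follows from uniqueness of the limit; the scaling freedom reflects that the dynamics~\eqref{eq:randexmatrix} are linear and homogeneous in $\tau$, so any constant multiple of a fixed point is again a fixed point.
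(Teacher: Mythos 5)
There is a genuine gap, in two related places. First, your plan never actually constructs the limit: ``Cauchy in distribution'' is asserted but no mechanism for it is given, and none is readily available for the forward process, because the coordinates of $\tau^0\Pi(1:n)$ are \emph{not} martingales in $n$ (in the two-diagonal example, $\E[\tau^{n+1}_j\mid\tau^n]=(1-\E b)\tau^n_j+\E b\,\tau^n_{j+1}\neq\tau^n_j$). The paper's key device, which is missing from your proposal, is the reversed (dual) product $\ttau^n=\tau^0\Pi(n:1)$: it has the same distribution as $\tau^n$ for each fixed $n$, but the balance condition~\eqref{eq:balance} makes each coordinate $\ttau^n_j$ a non-negative martingale, so martingale convergence plus an $\lll^2$ bound produce an honest almost-sure and $\lll^2$ limit; this limit \emph{is} $\tau^\infty$, and it also makes the fixed-point property (i) easy. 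Without the reversal trick, your argument for (i) -- pushing $\Pi(1)$ through a merely weak finite-dimensional limit, when $[\tau\Pi(1)]_j=\sum_i\tau_i\pi_{ij}(1)$ is an infinite sum -- is also left unjustified.

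Second, and more fatally, your covariance identity is wrong, and the error inverts your picture of which randomness survives. For non-centred masses the cross terms do not vanish: writing $A=\Pi(1:n)$, the correct computation gives
\begin{displaymath}
  \cov(\tau^n_j,\tau^n_{j'})=\sigma^2\,\E\sum_i A_{ij}A_{ij'}
  \;+\;\mu^2\,\cov\Bigl(\sum_i A_{ij},\,\sum_k A_{kj'}\Bigr),
\end{displaymath}
where the second term comes from the random column sums of the product; these have expectation $1$ but are not constant (your parenthetical claim that the weights ``sum to $1$'' is exactly this slip -- row sums equal $1$, column sums do not). Your formula, combined with the meeting-probability estimate $\E\sum_i A_{ij}^2=\P(Z^n=0)\to0$ (which is correct, and is~\eqref{eq:1} in the paper), would force $\var\tau^n_j\to0$, i.e.\ convergence to the degenerate sequence $\mu\I$ -- contradicting, e.g., Corollary~\ref{cor:balance}, where starting from \iid Gamma masses with Dirichlet proportions gives $\var\tau^n_j\equiv\sigma^2>0$ for all $n$. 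The paper's bookkeeping is the decomposition $\tau^0=\mu\I+\Delta^0$: for the \emph{centred} part the cross terms do vanish (because $\E\Delta^0_i=0$), so $\var\Delta^n_j=\sigma^2\sum_i\E(\tpi^n_{ij})^2=\sigma^2\P(Z^n=0)\to0$, while the non-degenerate limiting randomness is carried entirely by the environment through $\mu\,\I\,\Pi(n:1)$, i.e.\ by the martingale of column sums. So the perturbation of the chain $Z^n$ at the origin does not ``drive the masses toward a nondegenerate equilibrium'' -- the meeting probability governs precisely the part that dies; what survives is the environment fluctuation your formula discards. Your RWRE ingredients ($Z^n$ with kernel~\eqref{eq:ztranprob}, nullity of the state $0$) do match the paper's, and your fallback uniqueness argument (all admissible initial laws converge to one limit, linearity giving the scaling freedom) is the paper's as well, but it only becomes available after (ii) is proved with an identified common limit, which your construction does not deliver.
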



\begin{proof}
Introduce a
dual version of the process:
\begin{displaymath}
\ttau^n = \tau^0 \Pi(n:1)
\end{displaymath}
The proof is based on the distributional equality:
\begin{displaymath}
\ttau^n \ed \tau^n, n=0,1,2,\dotsc
\end{displaymath}
that holds for any fixed $n=1,2,\dots$


First, prove the theorem for the constant initial conditions $\tau^0 =
\I$, where $\I$ is the sequence of ones: $\tau^0_j =1$ for all
$j\in\Z$.  Denote by $\tpi^n_{ij}$ the elements of matrix $\Pi(n:1)$
and by $\bbb^n$ the $\sigma$-algebra generated by $\tau^0$ and
$\{\Pi(k),\ k=1,\dotsc, n\}$. We have that
\begin{align}
  \E [\ttau^{n+1}_j \cond \bbb^n] = \E \bigl[\sum_{i} \tpi_{ij}^{n+1} \cond
  \bbb^n\bigr]
  = \E \bigl[\sum_{i} \sum_{k} \pi_{ik}(n+1) \tpi_{kj}^{n} \cond \bbb^n\bigr]\nonumber\\
  = \sum_{k} \tpi_{kj}^{n} \E \sum_{i } \pi_{ik}(n+1). \label{eq:sumreverse}
\end{align}
Due to~\eqref{eq:balance}, $\E \sum_{i } \pi_{ik}(n+1)=1$ for any $k\in\Z$,
so we continue~\eqref{eq:sumreverse} with
\begin{displaymath}
  \sum_{k} \tpi_{kj}^{n} \E \sum_{i } \pi_{ik}(n+1)
  = \sum_{k} \tpi_{kj}^{n} = \ttau^{n}_j.
\end{displaymath}
Thus, for every $j$, the sequence
$\{\ttau^{n}_j\}_{n\geq 0}$ is a non-negative martingale, therefore it has an
almost sure limit, call it $\ttau^\infty_j$.
The sequence $\ttau^\infty = (\ttau^\infty_j)_{j\in \Z}$ obviously
satisfies~\eqref{cond:fixedpoint} as well as \eqref{cond:weakconv} for
the initial condition $\tau^0=\I$.
Moreover,
\begin{align*}
\var \ttau_j^n &= \var \sum_{i} \tpi_{ij}^n \leq 
\E (\sum_{i} \tpi_{ij}^n)^2 
=\E\sum_{i}(\tpi_{ij}^n)^2 + \E\sum_{k,l, k\neq l}\tpi_{kj}^n\tpi_{lj}^n
\nonumber \\
&\leq \sum_{i}\E(\tpi_{ij}^n)^2 + \sum_{k\neq l}p_k p_l
\leq \sum_{i}\E(\tpi_{ij}^n)^2 + 1. 
\end{align*}
Below in~\eqref{eq:1} we show that the first term vanishes so
for any $j$ the martingale $\{\ttau_j^n\}_{n\geq 0}$ is bounded in $\lll^2$ and hence it
converges in $\lll^2$, as well as any finite subvector of $\ttau^n$.

Now assume $\tau^0$ is general satisfying \nameref{cond:A1}, \nameref{cond:A2}.
Define
\begin{displaymath}
\Delta^n = \Delta^0 \Pi(n:1),\ \text{where } \Delta^0 = \tau^0 - \mu\I.
\end{displaymath}
Now we can rewrite $\ttau^n$ as the sum:
\begin{equation}\label{eq:ttaudecomp}
\ttau^n = \Delta^n+\mu\ttau^n.
\end{equation}
The second term converges to the limit
$\mu\ttau^\infty:=\tau^\infty$ coordinate-wise almost surely and in
$\lll^2$. Now show that each coordinate of the first term
vanishes in $\lll^2$, this will imply the desired weak convergence in
(ii).

The variance of $\Delta^n_j$ is given by the following expression:
\begin{align}\label{eq:vardelta}
  \var \Delta^n_j &= \var \sum_{i} \Delta^0_i \tpi^n_{ij} =
  \sum_{i} \var \Delta^0_i \tpi^n_{ij}+
  \sum_{i\neq k} \cov(\Delta^0_i \tpi^n_{ij},\Delta^0_k \tpi^n_{kj})\nonumber\\
  &= \sum_{i} \E(\tpi^n_{ij})^2 \E(\Delta^0_i)^2 = \sigma^2 \sum_{i}\E(\pi^n_{ij})^2
\end{align}
Here the third identity is based on the following fact that can be
checked directly: if $\xi,\eta,X,Y$ are random variables such that $(X,Y)$
is independent of $(\xi,\eta)$ and $\cov(X,Y)=0$, then
\begin{displaymath}
  \cov(\xi X, \eta Y) =\E X\,\E Y\,\cov(\xi ,\eta),
\end{displaymath}
implying all the covariance terms in~\eqref{eq:vardelta} are 0.

We now prove that $\E\sum_{i}(\pi^n_{ij})^2 \to 0, n\to\infty$ by making use of
the RWRE construction introduced earlier in this section. Let $W^n, \tW^n,
n=0,1,\dotsc$ be the two copies of conditionally independent (given
$\eee$) RWRE with transition probabilities $\Pi(n)$, and let $Z^n=W^n-\tW^n,
n=0,1,\dotsc$ with $W^0=\tW^0=j$. As already noted, $Z^n$ is
a Markov chain, with transition probabilities~\eqref{eq:ztranprob}.

The key observation is the following:
\begin{equation}\label{key-obs}
\P(Z^n=0) = \E\sum_{i}\P(W^n=\tW^n=i+j\cond\eee)
= \E\sum_{i}(\pi^n_{j, i+j})^2,
\end{equation}
where the last equality follows from the conditional independence of $W^n, \tW^n$.
Now, use the translation invariance~\nameref{cond:B1} to continue:
\begin{displaymath}
\E\sum_{i}(\pi^n_{j, i+j})^2 = \sum_{i}\E(\pi^n_{j-i, j})^2 =
\sum_{i}\E(\tpi^n_{ij})^2.
\end{displaymath}


It is easy to see that $0$ is a null state of the Markov chain
$Z^n$. Indeed, starting from $0$, $Z^n$ leaves $0$ after a
geometrically distributed with parameter
$\bigl(1-\sum_{i}\E\pi_i^2\bigr)$ number of steps, and while out of
$0$, $Z^n$ behaves as a symmetrical random walk on integers, therefore
\begin{equation}
\P(Z^n=0)=\sum_{i}\E(\tpi^n_{ij})^2\to 0,\ \ n\to\infty,\label{eq:1}
\end{equation}
hence by \eqref{eq:vardelta}, $\Delta_j^n$ tends to $0$ in $\lll^2$ for any $j$.

We have shown that both terms in the
decomposition~\eqref{eq:ttaudecomp} converge in $\lll^2$
coordinate-wise: $\ttau^n_j\ltwoto \tau_j^\infty$ for all $j\in\Z$,
which in turn implies the weak convergence $\tau^n \Rightarrow
\tau^\infty$, as well as the uniqueness of the distribution of
$\tau^\infty$, thus finishing the proof.

\end{proof}


\begin{remark}
  The finite second moment condition~\nameref{cond:A2} is essential
  for the convergence~\eqref{eq:wc} even in much more restrictive
  settings than ours as the following example shows.

  Assume that all exchange proportions are
  non-random  $\pi_{i,i+j}(n) = p_{j}$ with $p_{-1}=1-p_0=p\in(0,1)$ and $p_j=0,\
  j\in\Z\setminus\{0,-1\}$.  Then
  \begin{equation}\label{eq:eulersum}
    \tau_0^n = \sum_{j=0}^n \tau^0_j p^j(1-p)^{n-j}\binom{n}{j}
  \end{equation}
  and the (almost sure) limit of the latter expression when $n$ goes
  to infinity, if it exists, is called the \emph{Euler sum} of the
  sequence $\tau^0=(\tau^0_j)_{j\in\Z}$ with parameter~$p$.
  If $\tau^0$ is a sequence of \iid random variables,
  the almost sure limit of~\eqref{eq:eulersum} 
  exists if and only if the second moment of $\tau^0_0$ is finite, see
  \cite[Theorem 1]{BinMae:85} and the references therein.  In that
  case the limit is equal to $\mu=\E\tau^0_0$. 
\end{remark}

\begin{corollary}\label{cor:balance}
  If the vector of proportions $\pi$ has a Dirichlet distribution with
  the vector of non-negative parameters $\alpha =
  (\alpha_i)_{i\in\Z}$, $\sum_{i}\alpha_i = a < \infty$, then the
  fixed point is a vector $\tau^\infty$ of independent $\Gamma(a,
  \gamma)$-distributed random variables. In particular, if $\tau^0$
  satisfies the conditions~\nameref{cond:A1},\nameref{cond:A2}, then
  \begin{displaymath}
    \tau^0 \Pi(1:n) \Rightarrow \tau^\infty,
  \end{displaymath}
  where the components of $\tau^\infty$ are independent, with
  distribution $\Gamma(a, \gamma)$ where $\gamma =a/\mu= a/\E\tau^0_0$.
\end{corollary}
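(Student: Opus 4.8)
The plan is to combine the uniqueness clause of the preceding theorem with an explicit construction of one fixed point in the Dirichlet case, and then to pin down the single free scaling parameter by a mean computation. Since the theorem already guarantees a fixed point unique up to a distributional copy and a scaling factor, together with the convergence $\tau^0\Pi(1:n)\Rightarrow\tau^\infty$, it suffices to check that a vector of independent gammas satisfies the fixed-point equation and then to identify the correct rate.

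First I would exhibit the fixed point by a coupling that realises the masses and the Dirichlet proportions simultaneously on a common array of independent gammas. Concretely, take a doubly-indexed family $\{\xi_{i,k}\}_{i,k\in\Z}$ of independent variables with $\xi_{i,k}\sim\Gamma(\alpha_k,\gamma)$, set $\tau^\infty_i=\sum_k\xi_{i,k}$, and define the proportions sent out by agent $i$ through $\pi_{i,i+k}=\xi_{i,k}/\tau^\infty_i$. By the gamma-to-Dirichlet construction recalled in the introduction, each row $(\pi_{i,i+k})_k$ is $\Dirichlet((\alpha_k)_k)$-distributed, is independent across $i$, and is independent of the size $\tau^\infty_i$; hence this is a legitimate realisation of the dynamics under~\nameref{cond:B1}, with $\tau^\infty$ a vector of independent $\Gamma(a,\gamma)$ variables. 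The mass transported from $i$ to $j$ is then exactly $\tau^\infty_i\,\pi_{i,j}=\xi_{i,j-i}$, so the updated mass at site $j$ is $(\tau^\infty\Pi(1))_j=\sum_i\xi_{i,j-i}$.

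From this representation the invariance is transparent. Each new coordinate $(\tau^\infty\Pi(1))_j$ is a sum over $i$ of independent gammas with rate $\gamma$ and shapes $\alpha_{j-i}$ totalling $\sum_k\alpha_k=a$, so it is again $\Gamma(a,\gamma)$-distributed; and because the sets $\{(i,j-i):i\in\Z\}$, indexed by $j\in\Z$, partition $\Z\times\Z$, distinct coordinates are built from disjoint blocks of the jointly independent array $\{\xi_{i,k}\}$ and are therefore jointly independent. Thus $\tau^\infty\Pi(1)\ed\tau^\infty$, i.e. the independent-gamma vector is a fixed point. The uniqueness clause of the theorem then forces every fixed point to be a scaled copy of a single one; since the independent $\Gamma(a,\gamma)$ vectors (over all $\gamma$) are precisely the scaled copies of one another and all are fixed points, the limit $\tau^\infty$ of $\tau^0\Pi(1:n)$ must be independent $\Gamma(a,\gamma)$ for some $\gamma$. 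To pin the scale I would use the mean-conservation established in the theorem's proof, namely that the martingale property gives $\E\tau^\infty_j=\mu=\E\tau^0_0$; matching this against the mean $a/\gamma$ of a $\Gamma(a,\gamma)$ variable yields $\gamma=a/\mu$, and the weak convergence is then immediate from part (ii) of the theorem.

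I expect the main obstacle to be the technical handling of the countably infinite simplex, since the gamma-to-Dirichlet statement and Mosimann's Theorem~4 invoked earlier are phrased for finitely many coordinates. I would therefore need to justify the infinite-dimensional version: that $\sum_k\xi_{i,k}$ converges almost surely to a $\Gamma(a,\gamma)$ variable (guaranteed by $\sum_k\alpha_k=a<\infty$ via monotone convergence of the non-negative partial sums), that $(\xi_{i,k}/\tau^\infty_i)_k$ is genuinely $\Dirichlet((\alpha_k)_k)$ on the infinite simplex and independent of $\tau^\infty_i$, and that the reassembled sums $\sum_i\xi_{i,j-i}$ are well defined and have the claimed independence. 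These are routine limiting arguments, obtained by approximating with finite truncations of the index set, but they are the only place where genuine care is required.
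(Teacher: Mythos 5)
Your proposal is correct and follows essentially the route the paper intends for this corollary (which it leaves implicit): verify that a vector of independent $\Gamma(a,\gamma)$ masses is a fixed point via the Gamma--Dirichlet shape-versus-size relation --- your $\xi$-array coupling is exactly a concrete realisation of the sufficiency argument used in Theorem~\ref{th:fixpt} --- and then invoke the preceding theorem's uniqueness and convergence clauses, pinning the rate by the martingale mean preservation $\E\tau^\infty_j=\mu$. Your closing remark about the countable index set is the right point of care, and is handled as you say, by Laplace transforms or finite truncations using $\sum_i\alpha_i=a<\infty$.
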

 
When $\alpha_0=(1-r)\alpha$ and $\alpha_1=r\alpha$ for some $\alpha>0$
and $r\in(0,1)$, the statement of the last corollary is the sufficiency part
of Theorem~\ref{th:fixpt}(i). It is left open whether the necessity
statement is also true for the the cases when the support of
the sharing proportion distribution $\pi$ can have more than 2
indices.

A partial answer is provided by the next theorem: in the case when
$\pi$ is exchangeable on some finite subset of $\Z$, the only scenario
for which the masses of different agents in the equilibrium are
independent is indeed when $\pi$ has a Dirichlet distribution.

\begin{theorem}\label{th:fixexch}
Assume the conditions \nameref{cond:A1}, \nameref{cond:A2},
\nameref{cond:B1} are satisfied.
Assume additionally, that the support of $\pi$ is almost surely in a
compact set: $|K| = |\{i\in\Z: \P(\pi_i>0)>0\}|<\infty$, and that
every subvector of $\pi$ is exchangeable. Then
\begin{equation}\label{eq:stationarity}
\tau'= \tau \Pi \ed \tau
\end{equation}
if and only if the components of $\tau$ are Gamma-distributed: $\tau_j
\sim \Gamma(a,\gamma)$ for some $a, \gamma>0$ and $\pi$
is Dirichlet-distributed: $(\pi_i)_{i\in K} \sim
\Dirichlet((a/|K|)_{i\in K})$.
\end{theorem}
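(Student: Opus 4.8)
The plan is to separate sufficiency from necessity, following the two-diagonal template of Theorem~\ref{th:fixpt} but accounting for the fact that a single agent's mass is now split into $|K|=:m$ pieces rather than two. The one book-keeping identity I rely on throughout is $\tau'_j=\sum_{l\in K}\tau_{j-l}\,\pi^{(j-l)}_l$, where $\pi^{(i)}=(\pi^{(i)}_l)_{l\in K}$ is agent $i$'s sharing vector (an independent copy of $\pi$, independent of the masses). The key structural remark is that each piece $\tau_i\pi^{(i)}_l$ is assigned to exactly one output, namely position $i+l$, so distinct outputs are sums over \emph{disjoint} families of pieces.

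\textbf{Sufficiency} is then essentially a restatement of Corollary~\ref{cor:balance}. If $\tau_i\sim\Gamma(a,\gamma)$ and $\pi^{(i)}\sim\Dirichlet((a/m)_{l\in K})$, the Gamma--Dirichlet correspondence recalled in the introduction shows that the pieces $(\tau_i\pi^{(i)}_l)_{l\in K}$ of a single agent are \emph{independent} $\Gamma(a/m,\gamma)$ variables. Since pieces of different agents are automatically independent, all pieces are mutually independent; being disjoint sums of them, the outputs are independent, and each $\tau'_j$, a sum of $m$ independent $\Gamma(a/m,\gamma)$ pieces, is $\Gamma(a,\gamma)$. Hence $\tau'\ed\tau$.

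For \textbf{necessity}, assume $\tau\Pi\ed\tau$ with $\tau$ i.i.d. Taking Laplace transforms of a finite window $(\tau'_j)_{j\in J}$ and using independence of the masses and of the sharing vectors turns the fixed-point identity into the functional equation
\[
\prod_{j\in J}\phi(x_j)=\prod_{i}\E_{\pi}\Big[\phi\Big(\sum_{l\in K:\,i+l\in J}x_{i+l}\,\pi_l\Big)\Big],
\]
where $\phi$ is the two-sided Laplace transform of $F$. By the disjointness remark, $\tau'$ is i.i.d.\ as soon as all pieces are mutually independent, and as cross-agent pieces are automatically independent the entire problem collapses to the single claim
\[
\E_{\pi}\Big[\phi\Big(\textstyle\sum_{l\in K}s_l\pi_l\Big)\Big]=\prod_{l\in K}\E_{\pi}\big[\phi(s_l\pi_l)\big]\quad\text{for all }(s_l)_{l\in K},
\]
i.e.\ mutual independence of one agent's out-pieces $(\tau_i\pi^{(i)}_l)_{l\in K}$. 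Granting this, the conclusion is immediate: by construction the shape $\pi^{(i)}$ is independent of the size $\tau_i=\sum_l\tau_i\pi^{(i)}_l$, so, the pieces being independent, the multivariate Lukacs characterisation of the gamma law \cite[Theorem~4]{Mos:70} forces them to be $\Gamma(\alpha_l,\gamma)$ with a common rate $\gamma$; summing gives $\tau_i\sim\Gamma(\sum_l\alpha_l,\gamma)$ and normalising gives $\pi^{(i)}\sim\Dirichlet((\alpha_l)_{l\in K})$. Exchangeability of $\pi$ then forces $\alpha_l\equiv a/m$ with $a=\sum_l\alpha_l$, the symmetric Dirichlet of the statement.

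The hard part will be deriving the single-agent independence claim from the functional equation. In the two-diagonal case the out-windows $i+K$ of neighbouring agents overlapped in a single coordinate, so a pair of adjacent outputs shared exactly one agent and cancelling the unshared Laplace factors read off $\pi_0Y\perp\pi_1Y$ at once; for $m\ge 3$ the out-windows of several consecutive agents overlap, no pair of outputs isolates one agent, and this naive cancellation fails. I would overcome this by an induction on the width of $\supp\pi$: feeding the equation arguments $x_j$ that are constant on a long block $A$ and zero outside, only the $O(m)$ boundary agents straddling $\partial A$ contribute nontrivial factors; letting $A$ grow and comparing its two ends — which exchangeability renders mirror-symmetric, each expressible through the transforms $\E\phi(t\,\pi_{l_1}+\dots+t\,\pi_{l_k})$ of partial sums — yields relations that, reinserted into the full equation with general $x_j$, peel off one coordinate of $K$ at a time and reduce the $m$-fold independence to a chain of binary splits whose complete neutrality pins down the Dirichlet law. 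Here compactness of $K$ keeps the boundary layer finite and exchangeability collapses its two sides; this disentangling of overlapping contributions, which is vacuous when $|K|=2$, is the crux of the argument.
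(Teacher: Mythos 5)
Your sufficiency argument and your endgame (mutual independence of one agent's pieces plus the shape vs.\ size characterisation of \cite[Theorem 4]{Mos:70}, with exchangeability forcing equal Dirichlet parameters) coincide with the paper's. But your necessity proof has a genuine gap, and it originates in a false structural claim. You assert that for larger supports ``no pair of outputs isolates one agent'' and build your whole strategy around circumventing this. That assertion is wrong: if $\supp\pi=K=\{0,\dots,m\}$, the two outputs $\tau'_0$ and $\tau'_m$ at distance exactly $m$ (the width of the support) receive mass from the agent sets $\{-m,\dots,0\}$ and $\{0,\dots,m\}$ respectively, which intersect in the \emph{single} agent $0$. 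This is precisely the paper's starting point: writing the joint Laplace transform of $(\tau'_0,\tau'_m)$ once as a product over agents and once as a product of marginals (the latter by the fixed-point assumption, since $\tau$ is i.i.d.), all factors of unshared agents cancel, and one reads off the independence of $\tau_0\pi_{00}$ and $\tau_0\pi_{0m}$; exchangeability then gives pairwise independence of all pieces $\tau_0\pi_{00},\dots,\tau_0\pi_{0m}$. So the ``naive cancellation'' you discard is exactly what works, provided the pair of outputs is chosen at the right distance.

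The second half of the gap is the upgrade from pairwise to mutual independence, which your sketch (blocks of constant arguments, mirror symmetry, ``chain of binary splits whose complete neutrality pins down the Dirichlet law'') asserts but never carries out; as written, feeding arguments constant on a block only produces scalar identities for transforms of partial sums such as $\E\,\phi\bigl(t(\pi_{l_1}+\dots+\pi_{l_k})\bigr)$, and no argument is given that these determine the joint law of the pieces. The paper resolves this point by induction on the number of outputs considered: for the triple $(\tau'_0,\tau'_1,\tau'_m)$, every agent other than $0$ contributes to at most two of the three outputs, so its cross factor already factorises by the pairwise independence just established; cancelling those factors shows that $\tau_0\pi_{00},\tau_0\pi_{01},\tau_0\pi_{0m}$ are mutually independent, and exchangeability upgrades this to $3$-independence of all pieces. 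Repeating with $(\tau'_0,\tau'_1,\dots,\tau'_{k-2},\tau'_m)$, whose agent sets again meet only in agent $0$, climbs to full joint independence of $(\tau_0\pi_{00},\dots,\tau_0\pi_{0m})$ after $m$ steps, at which point \cite[Theorem 4]{Mos:70} applies. Your proposal needs this induction (or a genuine proof of your block argument) to be complete.
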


\begin{proof}
  For the simplicity of the presentation, take $K=\{0,1,2,\dotsc,m\}$,
  $m\geq 1$.

  The `if' part follows directly from the shape vs.\ size independence
  property of Gamma random vectors, as in the proof of
  Theorem~\ref{th:fixpt}.  Now prove the `only if' part.

  First, consider the joint distribution of the two components of
  $\tau'$ which are at distance $m$: $(\tau'_0, \tau'_m)$.  By the
  invariance assumption they are independent, i.e.\  in terms of
  Laplace transforms we have:
  \begin{align}\label{eq:genfactorize1}
    \phi_{\tau'_0,\tau'_m}(x_1, x_2) =\ &
    \phi_{\tau'_0}(x_1)\phi_{\tau'_m}(x_2)\nonumber\\ 
    =\
    &\prod_{i=-m}^{0}\phi_{\tau_{i}\pi_{i0}}(x_1)
    \times\prod_{j=0}^m\phi_{\tau_{j}\pi_{jm}}(x_2).
  \end{align}
  Alternatively, we can express the Laplace transform of the
  pair $(\tau'_0, \tau'_m)$ directly, taking into account the
  independence of $\tau_{i}\pi_{ij}$ for different $i$:
  \begin{multline}\label{eq:genfactorize2}
    \phi_{\tau'_0,\tau'_m}(x_1, x_2) = \E\exp\{x_1\tau'_0 + x_2\tau'_m\} \\
    = \E\exp\Big\{x_1\sum_{i=-m}^0\tau_{i}\pi_{i0}
    + x_2\sum_{j=0}^m \tau_{j}\pi_{jm} \Big\}\\
    = \prod_{i=-m}^{-1}\E\exp\{x_1\tau_{i}\pi_{i0}\} \times
    \E\exp\{x_1\tau_{0}\pi_{00} + x_2\tau_{0}\pi_{0m}\}\times
    \prod_{j=1}^m\E\exp\{x_2\tau_{j}\pi_{jm}\}\\
    = \prod_{i=-m}^{-1}\phi_{\tau_{i}\pi_{i0}}(x_1) \times
    \phi_{\tau_0\pi_{00},\tau_{0}\pi_{0m}}(x_1,x_2)\times
    \prod_{j=1}^m\phi_{\tau_{j}\pi_{jm}}(x_2).
  \end{multline}
  Comparing \eqref{eq:genfactorize1} and
  \eqref{eq:genfactorize2}, we conclude that the two quantities
  $\tau_0\pi_{00}, \tau_0\pi_{0m}$ are independent. Since the
  distribution of the vector
  \begin{displaymath}
    (\pi_{00},\pi_{01},\dotsc,\pi_{0m})
  \end{displaymath}
  is exchangeable, the random variables $\tau_0\pi_{00},
  \tau_0\pi_{01},\dotsc, \tau_0\pi_{0m}$ are pairwise independent.

  Next, consider the joint Laplace transform of the three components
  in $\tau'$: say, $\tau'_0$, $\tau'_1$ and $\tau'_m$. We can repeat
  the previous argument to arrive at the conclusion that the joint
  Laplace transform of the three quantities $\tau_0\pi_{00},
  \tau_0\pi_{01}, \tau_0\pi_{0m}$ factorizes into the product of their
  marginal Laplace transforms, making them mutually independent. Using
  the exchangeability assumption, we conclude that the random
  variables \\$\tau_0\pi_{00}, \tau_0\pi_{01},\dotsc, \tau_0\pi_{0m}$
  are 3-independent.

  Repeating this argument $m$ times yields the joint independence of
  all the components of the random vector $(\tau_0\pi_{00},
  \tau_0\pi_{01},\dotsc, \tau_0\pi_{0m})$.  Notice that $(\pi_{00},
  \pi_{01},\dotsc, \pi_{0m})$ is its shape vector and $\tau_0$ is its
  size variable. Moreover, $\tau_0$ is independent of $(\pi_{00},
  \pi_{01},\dotsc, \pi_{0m})$ by construction, so the application of
  the shape vs.\ size characterisation of Gamma-distributed random
  vectors \cite[Theorem 4]{Mos:70} finishes the proof.
\end{proof}

\section{Open problems and generalisations}
\label{sec:open-probl-gener}

The models we have considered here admit a variety of generalisations
and raise many intriguing questions. First of all, already mentioned
extension of Theorem~\ref{th:fixexch} to a non-exchangeable sharing
distribution would give a generalisation of Theorem~\ref{th:fixpt}. We
conjecture that a non-degenerate \iid limiting sequence $\tau^\infty$
is possible only for a Dirichlet sharing distribution $\pi$, but a
counterexample may well exist. If, however, the conjecture \emph{is}
true, there are arguments that a more general statement may hold
without assuming shift-invariance of $\Pi=(\pi_{ij})$. For instance,
the agents may be indexed by another countable group such as $\Z^d$
with $d\geq2$ rather than by $\Z$. Assume there exists a sequence
$q=(q_k)_{k\in \Z}$ satisfying the balance equation $qP=q$ for the
matrix $P$ of the expectations $p_{ij}=\E\pi_{ij}$. Using the same
relations between Gamma and Dirichlet distributions, one can show that
the vector of masses $\tau^0$ with independent components $\tau_i^0$
distributed as $\Gamma(a q_i,\gamma)$ with some $a,\gamma>0$ is left
invariant by the following sharing distributions now depending on the
node $k$: $(\pi_{ki})_{i\in\Z}\sim\Dirichlet((aq_kp_{ki})_{i\in\Z})$,
$k\in\Z$. In the shift-invariant case $\pi_{ij}=\pi_{j-i}$ for all
$i,j\in\Z$, the unit sequence $\I$ satisfies the balance equation and
we obtain Corollary~\ref{cor:balance}. It is interesting if this is
the only possibility to have non-trivial independent masses as a fixed
point.

Finally, an intriguing question is, if there exists a fixed point for
multi-dimensional analog of the neighbour-dependent shifts models. For
a point process in $\R^d,\ d\geq2$ neighbouring relation can be
defined in many different ways, for instance, neighbours can be
declared the nodes connected by en edge in any stationary graph having
the process points as vertices. We already mentioned in the
Introduction the adjustment process where the nodes move to the centre
of mass of their Voronoi cells. This provides an example of a shift
depending on the Delaunay graph neighbours with a hexagon lattice
vertices in $\R^2$ left intact. Whether there are any non-degenerate point
processes preserved by these neighbouring shifts is an open
question, as well as if there are neighbour-dependent shifts of any
kind preserving a multi-dimensional Poisson process. Note in this
respect that even the balance equation is hard to
satisfy on stationary graphs in $\R^d$, the reason being typically
unbounded degree of their vertices.

\paragraph{Acknowledgement}
\label{sec:acknowledgement}
\hfill\break
The authors are thankful to Stas Volkov for many fruitful discussions
and suggestions.

\end{document}